\documentclass[12pt]{amsart}
\usepackage{amsmath}
\usepackage{amsthm}
\usepackage{latexsym}
\usepackage{amssymb}

\headheight = 22pt
\vbadness=10000

\newtheorem{lemma}{Lemma}

\newcounter{obsctr}

%

%
%
\setlength{\textwidth}{5 in}
\setlength{\textheight}{8 in}
\linespread{1.25}
\begin{document}
\def\A {{\mathcal{A}}}
\def\D {{\mathcal{D}}}
\def\R {{\mathbb{R}}}
\def\N {{\mathbb{N}}}
\def\C {{\mathbb{C}}}
\def\Z {{\mathbb{Z}}}
\def\l {\lambda}
\def\tm {\tilde{m}}
\def\ml {\multline}
\def\multiline {\multline}
\def\lessim {\lesssim}
\def\ls {\lesssim}
\def\.ls {\leq {}_.C_0}
\def\a{\alpha}
\def\b{\beta}
\def\phi{\varphi}
\def\L{\Lambda}
\def\e{\epsilon}
\def\epsilon{\varepsilon}
\def\olm{\overline{L_m}}
\def\ol{\overline{L}}
\def\oz{\overline{z}}
\def\be{\begin{equation}}
\def\\[{\begin{equation}}
\def\ee{\end{equation}}
\def\\]{\end{equation}}
\title{On local Gevrey regularity for Gevrey vectors of subelliptic sums of squares - an elementary proof of a sharp Gevrey Kotake-Narasimhan theorem}
\author{David S. Tartakoff}
\address{Department of Mathematics, University
of Illinois at Chicago, m/c 249, 851 S.
Morgan St., Chicago IL  60607, USA}
\email{dstartakoff@gmail.com}
\date{}

\begin{abstract} 
We study the regularity of Gevrey vectors for H\"ormander operators 
$$ P = \sum_{j=1}^m X_j^2 + X_0 + c$$
where the $X_j$ are real vector fields and $c(x)$ is a smooth function, all in Gevrey class $G^{s}.$ The principal hypothesis is that $P$ satisfies the subelliptic estimate: for some $\e >0, \; \exists \,C$ such that
$$\|v\|_{\e}^2 \leq C\left(|(Pv, v)| + \|v\|_0^2\right) \qquad \forall v\in C_0^\infty.$$

We prove directly (without the now familiar use of adding a variable $t$ and proving suitable hypoellipticity for $Q=-D_t^2-P$ and then, using the hypothesis on the iterates of $P$ on $u,$ constructiong a homogeneous solution $U$ for $Q$ whose trace on $t=0$ is just $u$) that for $s\geq 1,$\,$G^s(P,\Omega_0) \subset G^{s/\e}(\Omega_0);$ that is, $$\forall K\Subset \Omega_0, \;\exists C_K: \|P^j u\|_{L^2(K)}\leq C_K^{j+1} (2j)!^s, \;\forall j $$
 $$\implies \forall K'\Subset \Omega_0, \;\exists \tilde C_{K'}:\,\|D^\ell u\|_{L^2(K')} \leq \tilde C_{K'}^{\ell+1} \ell!^{s/\epsilon}, \;\forall \ell.$$
In other words, Gevrey growth of derivatives of $u$ as measured by iterates of $P$ yields Gevrey regularity for $u$ in a larger Gevrey class. 
 
When $\epsilon =1,$ $P$ is elliptic and so we recover the original Kotake-Narasimhan theorem (\cite{KN1962}), which has been studied in many other classes, including ultradistributions (\cite{BJ}).

We are indebted to M. Derridj for multiple conversations over the years.

\end{abstract}
\maketitle
\pagestyle{myheadings}
\markboth{David S. Tartakoff}
{A sharp Kotake-Narasimhan theorem in Gevrey spaces with an elementary proof}

\section{Background}

 In 1972, Derridj and Zuily \cite{DZ} proved $G^s$ hypoellipticity ($Pu \in G^s \implies u \in G^s$) for 
 $$ P = \sum_{j=1}^m X_j^2 + X_0 + c$$
 satisfying
 \begin{equation}\label{ape}\|v\|_{\e}^2 \leq C\left(|(Pv, v)| + \|v\|_0^2\right) \qquad \forall v\in C_0^\infty\end{equation} 
whenever $s>1/\e=q/p \quad \text{with} \quad p,q \in \mathbb{N}^+$  
and very recently, for $P$ with $G^k$ coeffients, $k\in \mathbb{N^+},$ by studying Gevrey vectors for such operators (see below), Derridj was able to sharpen this result to include $s=1/\e=q/p,$ but still with {\it rational} $\e$ and $G^k$ coefficients, $k\in \mathbb{N^+}$ (announced in \cite{D2017} and proven in \cite{D2016prepr}). 

Consider a linear partial differential operator $P$ of order $2$ with real analytic coefficients. An {\it analytic} vector for $P$ is a distribution $u$ such that $u$ behaves analytically when differentiated by powers of $P$ alone: locally, $\|P^ju\| \leq C^j(2j)!$ that is, not all derivatives of $u$ are assumed to behave as though $u$ were analytic, only those sums occurring together precisely as in $P.$

Similarly a Gevrey-s vector $u$ for $P$ (with $P$ only assumed to have Gevrey-s coefficients now) satisfies (locally) $\|P^ju\| \leq C^j(2j)!^s,$ or more precisely, 
$$\forall K\Subset \Omega_0, \;\exists C_K: \|P^j u\|_{L^2(K)}\leq C_K^{j+1} (2j)!^s, \;\forall j .$$ 

Derridj proved that Gevrey-s vectors for $P$ under (\ref{ape}) belong to $G^{s/\e}$  (for $s>1/\e$ if $\e$ is rational) and, to accomplish this, followed the classical method of adding a variable and showing (local) Gevrey hypoellipticity in $G^{1,s/\e}_{t,x}$ for the operator 
\begin{equation}\label{addt} Q=-D_t^{2}-P. \end{equation} 
This yields the result since the (convergent) homogeneous solution
$$U(t,x) = \sum_{\ell \geq 0} (-1)^{\ell}\frac{t^{2\ell}}{(2\ell)!} P^\ell u(x)$$ 
for $Q$ is just equal to $u(x)$ when $t=0.$

Slightly earlier, N. Braun Rodrigues, G. Chinni, P. D. Cordaro and M. R. Jahnke \cite{RCCJ} had obtained a (global) result on a torus for a restricted subclass of such operators $P.$ 
 
 The methods we use also apply to prove the anisotropic hypoellipticity for (\ref{addt}) even for non-rational $\e.$
 
Note that $G^s$ {\it functions} are always Gevrey-s {\it vectors} for any $P.$

\section{general considerations}
There are two main results of this paper. First, the subellipticity index $\e$ need no longer be rational and secondly, we are able to let $s$ equal $1/\e.$ From a technical point of view, the proof is no harder for Gevrey-k coefficients than for analytic coefficients, so we take the vector fields to have analytic coefficients. 

And from a more personal point of view, in reading Derridj's preprint (\cite{D2017}) we could not find a reason why the result should not follow from the direct lines we have established over many decades and which in fact avoid the need to add a variable and deal with (\ref{addt}), despite the historical significance of that approach which in some sense deals with iterates of $P$ in a less obvious way.

In the elliptic case ($\e=1$ in (\ref{est:epsilon}) just below), we recover the celebrated Kotake-Narasimhan theorem (\cite{KN1962}).

The only hypothesis, aside from Gevrey smoothness of the coefficients of $P$ near $\overline{\Omega_0}$, is that for some real $0<\epsilon<1,$ 
\begin{equation} \label{est:epsilon}\| v\|_{\e}^2 \;(+\sum_1^n \|X_j v\|_{L^2}^2)\leq C\{|(Pv, v)_{L^2}| + \|v\|_{L^2}^2\}, \quad \forall v\in C_0^\infty (\Omega_0)
\end{equation}

\section{smoothness}
From the basic {\em a priori} estimate (\ref{est:epsilon}) and those that will follow from it, we have $u\in C^\infty$: from $Pu \in L^2_{loc}$ it will follow from (\ref{est:epsilon}) that $u\in H^\e_{loc}.$ From our estimate (\ref{i.p.2epsilon}) below (for $\|u\|_{2\e}^2$), it will follow that $Pu \in H^\e_{loc},$ (since $P^2u\in L^2_{loc}$) and hence that $u\in H^{2\e}_{loc},$ and similarly from $P^n u \in L^2_{loc},$  that $P^{n-1} u \in H^\e_{loc},$ \ldots, and finally that $u\in H^{(n+1)\e}_{loc}$ (for all $n,$ and hence $u \in C^\infty$).

We will henceforth assume that $u$ is smooth.

And furthermore, there is no difference in the proof if one assumes that the coefficients of $P$ are real analytic functions and not merely Gevrey functions; thus we will not mention the smoothness of the coefficients again. 
 
\section{estimates}

Unless otherwise specified, norms and inner products are in $L^2.$  We have used 
a fractional power $\Lambda^\mu,$ of the Laplacian defined by
$$ \widehat{\Lambda^{\mu} w} (\xi) = (1+|\xi|^2)^{\mu/2}\hat{w}(\xi).$$

In order to obtain estimates at higher and higher levels, we want to replace $v$ by $\phi(x)\Lambda^{\epsilon}v$ above, with $\phi \in C_0^\infty(K^{0}),$ $\phi \equiv 1$ on $K'$ so that we are inserting suitably supported functions into the norm, and we denote by `$(AB)$' {\em{both}} terms with $AB$ and with $BA$ (i.e., the order of $A$ and $B$ is unspecified), 
so $\|(X_j \phi \L^\e) v\|_{L^2}^2 (=\|(X_j (\phi \L^\e)) v\|_{L^2}^2$ is shorthand for $\|X_j \phi \L^\e v\|_{L^2}^2 + \|\phi \L^\e X_j  v\|_{L^2}^2.$ We also will not need to distinguish the various $\{X_j\}$ or explicitly sum over them:
\be\label {epsilon_est}\|\phi\L^\e v\|_{\epsilon}^2 + \|(X \phi\L^\e) v\|_{L^2}^2 \leq C_0\{ |(P\phi\Lambda^{\epsilon}v, \phi\Lambda^{\epsilon}v)_{L^2}| +\|[X, \phi\L^\e]v\|_{L^2}^2\}_.\ee

Finally, a right hand side with $C_0$ in front will be taken to mean that there may be a uniformly `junk' term on the right, in this case $\|\phi \L^\e v\|_{L^2}^2$ from (\ref{est:epsilon}). The constant $C_0$ may take various, but finitely many, values, independent of $\e.$

Thus, keeping both norms and inner products for the moment,
\be\label{i.p.2epsilon} \|\phi\L^\e v\|_{\epsilon}^2 +\|(X \phi \L^\e) v\|_{L^2}^2\ee 
$$\leq C_0\{|(\phi\Lambda^{\epsilon}Pv, \phi\Lambda^{\epsilon}v)| + |([P, \phi\Lambda^{\epsilon}]v, \phi\Lambda^{\epsilon}v)| +\|[X, \phi\L^\e]v\|_{L^2}^2\}.
$$
To expand the brackets, we denote, generically,
$$[P, \phi \L^\e]= [X^2,\phi \L^\e] = X[X,\phi \L^\e] + [X,\phi \L^\e]X$$
$$=X[X,\phi \L^\e] + \phi' \L^\e X +  \phi[X,\L^\e]X
$$
and 
$$\phi[X,\L^\e]X= X\phi[X,\L^\e] - \phi'[X,\L^\e] + \phi [[X,\L^\e],X]
$$
so that expanding the second term on the right in (\ref{i.p.2epsilon}), integrating by parts and interchanging $\phi$ and $\phi',$
$$([P, \phi\Lambda^{\epsilon}]v, \phi\Lambda^{\epsilon}v)  \sim -([X, \phi\Lambda^{\epsilon}]v, X\phi\Lambda^{\epsilon}v) + (\phi \Lambda^{\epsilon}Xv, \phi'\Lambda^{\epsilon}v)  $$
$$- (\phi[X,\L^\e]v,X \phi \L^\e v)
+ (\phi[X,\L^\e]v, \phi' \L^\e v)
+ (\phi [[X,\L^\e],X]v, \phi \L^\e v).
$$
and so, after the usual weighted Schwarz inequalities, (\ref{i.p.2epsilon}) reads 
$$\|\phi\L^\e v\|_{\epsilon}^2 +\|(X \phi \L^\e) v\|^2_{L^2}\leq C_0\{|(\phi\Lambda^{\epsilon}Pv, \phi\Lambda^{\epsilon}v)|$$
\begin{equation}\label{i.p.2epsilon.1}  (+\|[X, \phi\L^\e]v\|_{L^2}^2)+ \|\phi'\L^\e v\|_{L^2}^2  + \|\phi \L_1^\e v\|_{L^2}^2\} + \|\phi \L_2^\e v\|_{-\e}^2\}\end{equation}
where $\L_1^\e$ stands for $[X, \L^\e]$ and $\L_2^\e$ for $[[X,\L^\e],X]$  pseudodifferential operators of order $\e.$ We have suppressed the term $\|\phi [X, \L^\e]\|_{L^2}^2,$ since $\phi [X, \L^\e] = [X, \phi \L^\e] - X(\phi)\L^\e$ both of which already appear above, and now we could omit the term $\|[X,\phi \L^\e]v\|_{L^2}^2$ since the last two terms contain this, though we will preserve it for now because it is suggestive and helps make sense of the second term on the left. 

Everything at this point is well defined. Things become somewhat more complicated as we seek to obtain estimates for higher derivatives. In the end we shall not write everything down explicitly, but for a while it will be important to keep the reader grounded. 

Some features of (\ref{i.p.2epsilon.1}) are that a gain of $\e$ results in at most one derivative on $\phi,$ and clearly this will be important. It is for this reason that we have retained the inner product with $P$ since when an extra derivative threatens, we are able to exchange the two $\phi$'s on the two sides of the inner product and avoid a second derivative on $\phi$ when we have gained only one $\e$ power of $\L.$ And while $v$ is a test function of compact support, our `solution' $u$ will not have compact support. We will introduce a `largest' localizing function, denoted $\Psi,$ which will reside beside $u$ everywhere but in the end be removable modulo infinitely smoothing brackets with precise bounds since there will be other functions of smaller support, such as $\phi,$ to render $\Psi$ unnecessary. 

\section{Personal heuristics} 

This paper has an unusual formulation.

It has become my conviction over the years that a mathematical paper that contains every symbol, and every derivative of a localizing function explicitly notated becomes unreadable. I personally require more guidance in reading a technical paper to aid me in following the formulas. Perhaps, to paraphrase Frege in \cite{F1950}, anyone who understands the flow of the argument and the justification of the flow well enough probably does not actually need all the detailed calculations. 

I would not go that far. But the challenge of following every bracket and every derivative and writing it down would challenge the stomach of the strongest physique and I prefer to omit that much detail and ask the reader to honor the author's honesty and track record and precision and to let the flow suffice in many places. 

I took this approach in my previous paper,  Analytic Hypoellipticity for a New Class of Sums of Squares of Vector Fields in $\mathcal{R}^3$ \cite{Tartakoff3} and in fact the referee wrote that "I guess the author is trying to explain the ideas in his technical calculations by describing them in words with a minimum of symbols, but the words pile on to the point where one needs to be almost as familiar with the calculations as the author himself for them to make sense. A reader might wonder if the author is trying to pull a fast one by substituting a lot of hand-waving for honest computation Ñ if it werenÕt for some of the subsequent pages where the symbols swamp the words. Can't one strike a better balance?" But I have tried for many years to find a better balance and concluded that in this material, and for this author, the answer is "Sadly, no."

\section{Derivatives in terms of powers of $P$} 

The algorithm we will use to achieve estimates in terms of pure powers of $P$ on $u$ is as follows: as above, although now of order $\b,$ modulo uniform, lower order errors, with $\|(X \phi \L^{\b})v\|_{L^2}^2 \,{\equiv \atop \text{def}}\, \|X \phi \L^{\b}v\|_{L^2}^2 + \|\phi \L^{\b} Xv\|_{L^2}^2$, 
\begin{enumerate}
\item First estimate,  for general $\b$ (and $v\in C_0^\infty (\Omega)),$
$$\|\phi\L^{\beta}v\|_{\e}^2 + \|(X \phi \L^{\b})v\|_{L^2}^2$$
$$ \leq C |(P\phi\L^{\b}v, \phi\L^{\b}v)_{L^2}| \quad (+\; \|[X, \phi \L^\b]v\|_{L^2}^2)$$
\item Then commute $P$ past $\phi \L^{\b}$ until it lands beside $v,$ to obtain \break $(\phi \L^{\b}Pv,\phi\L^{\b}v)_{L^2},$ thus requiring treatment of the bracket \break$([P,\phi\L^{\b}]v,\phi\L^{\b}v)_{L^2}.$
\item Next, expand the second inner product of item (2) by writing $P=X^2$ generically, so with $\phi' = \pm [X, \phi]$
$$[P, \phi\L^{\b}] = \{\phi' X + X\circ \phi' \}\L^\b + 2\phi X[X, \L^{\b}] + \phi [[X,\L^{\b}],X]$$
 and thus, integrating $X$ by parts and/or switching $\phi$ and $\phi',$ and using a weighted Schwarz inequality, uniformly in $\b,$ and modulo a small constant times the   LHS in (1),
$$|([P,\phi\L^{\b}] v,\phi\L^{\b} v)| \sim \|\phi'\L^{\b} v\|_{L^2}^2 + \|\phi \L^{\b}_{1} v\|_{L^2}^2 + \|\phi \L^{\b}_{2} v\|_{-\e}^2
$$
where we recall the notation
$$\L_1^\b = [X, \L^\b] \text{ and } \L_2^\b = [[X, \L^\b], X],$$
both of which are of order $\b$.
\item We gather these steps and freely move $\phi$ past powers of $\L,$ since any bracket (whether applied to $v$ or $Pv$) will introduce one or more derivatives on $\phi$ but also decrease the power of $\L$ by at least the same number (not just by that number times $\e <<1$), a trade that will be acceptable (together with the corresponding remainders) and that we will not write explicitly: 
$$\|\phi\L^{\beta+\e}v\|_{L^2}^2 + \|(X \phi \L^{\b})v\|_{L^2}^2 \sim \|\phi\L^{\beta}v\|_{\e}^2 + \|(X \phi \L^{\b})v\|_{L^2}^2 $$
$$\leq C \|\phi \L^{\b-\e}Pv\|_{L^2}^2 + \|\phi'\L^{\b} v\|_{L^2}^2 + \|\phi \L^{\b}_{1} v\|_{L^2}^2 + \|\phi \L^{\b}_{2} v\|_{-\e}^2.
$$

\item These last two terms are of order $\b$ and will be expanded below in the section {\em{Expanding the Brackets}} below. Looking ahead to (\ref{kcomm4}) below, however, for the moment with $\mu = \b$ and any $r,$
$$\phi \L^{\b}_{1} v = \phi\sum_{\ell=1}^{r-1}\frac{1}{\ell!}a^{(\ell)}{(\L^{\b})}^{(\ell)}Dv+  {{}_1R_{\,r} v}
$$
so that (with Lemma 5.1, for $X_j$ with analytic coefficients)
$$\|\phi \L^{\b}_{1} v\|_{L^2} \leq \sum_{\ell=1}^{r-1}\|\phi\frac{a^{(\ell)}}{\ell!}(\L^{\b})^{(\ell)}Dv\|_{L^2} + \|{{}_1R_{\,r} v}\|_{L^2}
$$
$$\leq \sum_{\ell=1}^{\b-1}C_a^\ell\, \b^\ell\|\phi(\L^{\b-\ell})Dv\|_{L^2} + \|{{}_1R_{\,r} v}\|_{L^2}
$$
and the similar but slightly more complicated expression for
$$\L^{-\e}\phi\L_2^\b v = 
\L^{-\e}\phi [[aD,\L^{\beta}],aD]v = \L^{-\e}\phi [[a,\L^{\beta}]D,aD]v$$
$$=
\L^{-\e}\phi ([a,\L^{\beta}]a'D + [[a,\L^\b],aD]D)v$$
$$=
\L^{-\e}\phi ([a,\L^{\beta}]a'D + [[a,\L^\b],a]D^2+ a[a',\L^\b]D)v$$
$$\sim
\L^{-\e}\phi ([[a,\L^\b],a]D^2+ 2a[a',\L^\b]D)v$$
$$\sim \L^{-\e}\phi\sum_{\ell=1}^{r-1} \sum_{\ell'=1}^{r'-1}\frac{1}{\ell!\,\ell'!}a^{(\ell)}a^{(\ell')}{(\L^{\beta})}^{(\ell+\ell')}D^2v$$
$$+ \L^{-\e}\phi\sum_{\ell=1}^{r-1}\frac{1}{\ell!} a^{(\ell +1)}a{(\L^{\beta})}^{(\ell)}Dv
$$
so that, and bringing the coefficients out of the  norm at the expense of additional brackets, as though it were the $L^2$ norm,
$$\|\L^{-\e}\phi\L_2^\b v\|_{L^2} \leq 
\sum_{\tilde\ell=2}^{r-1}C_a^{\tilde\ell}\b^{\tilde\ell}\|\phi\,\L^{\beta-\tilde\ell}D^2v\|_{-\e} + \sum_{\ell=1}^{r-1}C_a^{\ell}\b^{\ell}\|\phi\,\L^{\beta-\ell}Dv\|_{-\e}
$$
or
$$\|\L^{-\e}\phi\L_2^\b v\|_{L^2} \leq 
\sum_{\ell=0}^{r-1}C_a^{\ell}\b^{\ell}\|\phi\,\L^{\beta-\ell}v\|_{-\e}.
$$
As always with pseudodifferential operators, there will be a sum of terms of lower and lower order as dictated by Leibniz formula for brackets, and remainders.

\item We repeat the above steps by applying the estimate in (4) to the terms on the right in (4) producing $\phi\L^{\b-3\e}P^2v,$ $\phi' \L^{\b-2\e}Pv$ and $\phi''\L^{\b-\e} v$, etc. On the right hand side each of the four terms will lead to a `spray' of additional more terms, about four times as many at each next step. The resulting paradigm may be simplified to read 
$$\|\phi\L^{\beta+\e}v\|_{L^2}^2 \leadsto  \|\phi \L^{\b-\e}Pv\|_{L^2}^2 + \|\phi'\L^{\b} v\|_{L^2}^2$$
$$\leadsto  \|\phi \L^{\b-3\e}P^2v\|_{L^2}^2 + \|\phi' \L^{\b-2\e}Pv\|_{L^2}^2 + \|\phi''\L^{\b-\e} v\|_{L^2}^2,$$
and in general, after $k$ iterations, there will be $C^k$ terms of the form 
$$\|\phi^{(k_1)}\L^{\b+\e-(k_1+2k_2)\e}P^{k_2}v\|_{L^2}^2$$
 with $k=k_1+k_2.$ 
\item Continue each iteration until we get to $\b+\e-(k_1+2k_2)\e\leq 0,$ but at the previous step, $\b+\e-(k_1+2k_2)\e\geq 0,$ i.e., $k_1+2k_2=\lceil \frac{\b+\e}{\e} \rceil,$ so that
$$\|\phi\L^{\beta+\e}v\|_{L^2}^2 \leq C^k \|\phi^{(k_1)}\L^{\b+\e-(k_1+2k_2)\e}P^{k_2}v\|_{L^2}^2$$
where the power of $\L$ in each term is non-positive.
\item It remains to apply all of this to our `solution' $u,$ which is subject to the growth of $P^ku,$ not functions like $v$ which are `test' functions and have compact support:
$$\|P^j u\|_{L^2(K)}\leq C_K^{2j+1} (2j)!^s, \;\forall j \text{ for suitable } C_K.$$ 

But we are free to replace $v$ in this estimate by $\Psi u$ where $\Psi \equiv 1$ near the support of $\phi,$ since any error committed in then bringing $\Psi$ out of the norm will be of order $-\infty.$ Modulo this error, then, 
$$\|\phi\L^{\beta+\e}\Psi u\|_{L^2}^2 \leq C^k \|\phi^{(k_1)}\L^{\b+\e-(k_1+2k_2)\e}P^{k_2}u\|_{L^2(K)}^2$$
Our conclusion is that for any $K'\Subset \Omega_0, \;\exists \,C_{K'}: \|D^m u\|_{L^2(K')} \leq \tilde{C}^{m+1} m!^{s/\e}, \;\forall m.$ Taking $\b+\e = m,$ we have
$$\|D^m u\|_{L^2(K')} \leq \underline{\tilde{C}^{m+1}} \sup_{k_1+2k_2=\lceil \frac{m}{\e} \rceil}\|\phi^{(k_1)}\|_\infty \|P^{k_2}u\|_{L^2(K)},$$
in particular, with $\phi \in G^s,$  
$$\|D^m u\|_{L^2(K')} \leq \tilde{C}^{m+1} \sup_{k_1+2k_2=\lceil \frac{m}{\e} \rceil}k_1!^s\| P^{k_2}u\|_{L^2(K)} $$
$$\leq \tilde{C}^{m+1} {\lceil \frac{m}{\e} \rceil}!^s 
\leq C^m(\frac{m}{\e}+1)!^s\leq C_1^{m/\e}(\frac{m}{\e})!^s$$
\end{enumerate}

\section {Expanding the brackets}
In order to write out the above brackets of the previous section concretely, we use a Taylor expansion of the symbol $\l^\mu(\xi)$: $\forall \mu, r,$ and write, with $f=a$ (a coefficient of one of the $X$'s, which will always be accompanied by $\phi$) or by $f=\phi(x)$ itself,
$$([f,\L^{\mu}]v)^\wedge (\xi) = \int \hat{f}(\xi-\eta)\sum_{\ell=1}^{r-1} \frac{(\xi-\eta)^{\ell}{\l^{\mu}}^{(\ell)}(\eta)}{\ell !}\hat{v}(\eta)d\eta+ \widehat{{}_f R_{\,r}v}(\xi)$$
$$=\sum_{\ell=1}^{r-1} \int \frac{\widehat{f^{(\ell)}}(\xi-\eta)}{\ell !}{\l^{\mu}}^{(\ell)}(\eta)\hat{v}(\eta)d\eta+ \widehat{{}_f R_{\,r}v}(\xi)
$$
where
$$\widehat{{}_f R_{\,r}v}(\xi)=\int \frac{\widehat{f^{(r)}}(\xi-\eta)}{r!}\underbrace{\int_0^1dp \ldots \int_0^1dt}_r {\;\l^{\mu}}^{(r)}(\eta + t...p(\xi-\eta))\hat{v}(\eta)d\eta$$
so that (if we write $({\L^{\mu}})^{(\ell)}$ for the operator with symbol ${(\l^\mu)}^{(\ell)}$), with $f=\phi:$ 
\begin{equation}\|[\phi(x),\L^{\mu}]v\|_{L^2} \leq \sum_{\ell=1}^{r-1}\frac{1}{\ell!}\|\phi^{(\ell)}{(\L^{\mu})}^{(\ell)}v\|_{L^2}+ \| {{}_\phi R_{\,r} v}\|_{L^2}\end{equation}
and, recalling that we write $X=aD,$ with $f=a$ (localized):
\be\label{kcomm4}\|\phi[a,\L^{\mu}]Dv\|_{L^2} \leq \sum_{\ell=1}^{r-1}\frac{1}{\ell!}\|\phi a^{(\ell)}{(\L^{\mu})}^{(\ell)}Dv\|_{L^2}+ \| \phi \,{{}_a R_{\,r} v}\|_{L^2}.
\ee
And for the last term in (4) above, $\|\phi \L^{\b}_{2} v\|_{-\e}^2,$ we write
$$ \L^{-\e}\phi \L^{\mu}_{2} v = \L^{-\e}\phi [[a,\L^{\mu}]D,aD]v$$
$$=
\L^{-\e}\phi ([a,\L^{\mu}]a'D + [[a,\L^\mu],aD]D)v$$
$$=
\L^{-\e}\phi ([a,\L^{\mu}]a'D + [[a,\L^\mu],a]D^2+ a[a',\L^\mu]D)v$$
$$\sim
\L^{-\e}\phi ([[a,\L^\mu],a]D^2+ 2a[a',\L^\mu]D)v$$
$$\sim \L^{-\e}\phi\sum_{\ell=1}^{r-1}\frac{1}{\ell!} \sum_{\ell'=1}^{r'-1}\frac{1}{\ell'!}a^{(\ell)}a^{(\ell')}{(\L^{\mu})}^{(\ell+\ell')}D^2v$$
$$+ \L^{-\e}\phi\sum_{\ell=1}^{r-1}\frac{1}{\ell!} a^{(\ell +1)}a{(\L^{\mu})}^{(\ell)}Dv
$$
\begin{lemma} For any $\mu\geq 0$ and any $\sigma,$ 
$${(\l^{\mu})}^{(k)}(\rho)=
\sum_j\underline{(3\mu)^k}
\left\{
\begin{array}{c}
\l^{\mu -k -2j}(\rho), \quad 0\leq j \leq \frac{k}{2}, \;k {\text{ even }}\\
\rho\,\l^{\mu-k -1 -2j}(\rho), \quad 0\leq j \leq \frac{k-1}{2}, \;k {\text{ odd }}
\end{array}
\right.$$
where underlining the coefficient before the brace indicates the number of terms of the form which follow that are present.
\end{lemma}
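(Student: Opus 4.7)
The plan is to argue by induction on $k$. The base case $k=0$ is immediate, since $\lambda^{\mu}(\rho)$ itself fits the even pattern with $j=0$ and the factor $(3\mu)^{0}=1$.

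The inductive step rests on two elementary differentiation rules together with the defining relation $\rho^{2}=\lambda^{2}-1$:
$$\partial_{\rho}\lambda^{\nu}=\nu\,\rho\,\lambda^{\nu-2}, \qquad \partial_{\rho}\bigl(\rho\,\lambda^{\nu}\bigr)=\lambda^{\nu}+\nu\,\rho^{2}\,\lambda^{\nu-2}=(1+\nu)\lambda^{\nu}-\nu\,\lambda^{\nu-2}.$$
In the even-$k$ case, each summand $\lambda^{\mu-k-2j}$ with $0\le j\le k/2$ produces on differentiation exactly one odd-form term $(\mu-k-2j)\rho\lambda^{\mu-(k+1)-1-2j}$ at level $k+1$, preserving the index range. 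In the odd-$k$ case, each summand $\rho\lambda^{\mu-k-1-2j}$ with $0\le j\le (k-1)/2$ produces two even-form terms at level $k+1$: $(1+\mu-k-1-2j)\lambda^{\mu-(k+1)-2j}$ and $-(\mu-k-1-2j)\lambda^{\mu-(k+1)-2(j+1)}$. Reindexing the second by $j'=j+1$, the union of resulting indices covers exactly $0\le j'\le (k+1)/2$, as required.

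The underlined factor $(3\mu)^{k}$ records the compounded coefficient and multiplicity growth under this recursion. At each differentiation, the newly generated scalar prefactor has absolute value at most $\max\{|\nu|,|1+\nu|\}\le \mu+k+1$, and the number of distinct summands produced per preceding term is at most three (namely the raw pieces $\lambda^{\nu}$, $\nu\lambda^{\nu}$ from the $\rho^{2}=\lambda^{2}-1$ substitution, and $-\nu\lambda^{\nu-2}$, before they coalesce into the two stated terms). In the regime in which the lemma is applied—where $\mu$ plays the role of the large parameter $\beta$ and dominates $k$—each factor is bounded by $3\mu$, so $k$ iterations yield the stated $(3\mu)^{k}$.

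The only mildly delicate point, and essentially the sole obstacle, is the bookkeeping for the odd-to-even step: two even-form outputs emerge from a single odd-form input, and one must match indices carefully so that the $j+1$ shift from the term at index $j$ is amalgamated with the $j'$ contribution coming from the neighboring term without either double-counting or omission. This is pure arithmetic, but it is precisely the place where the base $3$ (rather than $2$) in $(3\mu)^{k}$ is earned.
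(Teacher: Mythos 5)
Your proof is correct and takes essentially the same route the paper does: the paper's entire ``proof'' is the one-line observation that $L'(\rho)=\rho L^{-1}(\rho)$ with $L=(1+\rho^2)^{1/2}$, after which it ``omits the details''; your two differentiation identities $\partial_\rho\lambda^\nu=\nu\rho\lambda^{\nu-2}$ and $\partial_\rho(\rho\lambda^\nu)=(1+\nu)\lambda^\nu-\nu\lambda^{\nu-2}$ are precisely the consequences of that observation, and your induction on $k$ supplies the omitted bookkeeping. The index-range check in the odd-to-even step is done correctly, and you rightly flag that $(3\mu)^k$ should be read as a generous count-and-coefficient bound rather than an exact constant, valid in the regime $\mu\gtrsim k$ in which the lemma is actually invoked (where $\mu=\beta$ and $k=\ell\le r-1<\beta$); the paper's statement ``for any $\mu\ge 0$'' is loose on this point (and the phantom quantifier ``and any $\sigma$'' is a typo), but your reading matches the intended use.
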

\begin{proof}
The simplest proof we have found of this result is to denote by $L$ the expression $(1+\rho^2)^{1/2}$ since the pleasant fact that $L'(\rho) = \rho L^{-1}(\rho)$ seems to make the calculations suggestive and transparent. We omit the details but the precise dependence on $\mu$ and $r$ above are important.

To treat the remainders, we divide up the region of integration as we did in (\cite{Tartakoff1973}) into two parts, the first where $|\xi-\rho|\leq \frac{1}{10}|\rho|,$ and hence the action of $R_{\,r}$ is bounded by the $L^1$ norm of derivatives of the coefficients of total order $r$ times $\|\L^{\mu-r}v\|_{L^2} $ and the region where $|\xi|$ (and hence $|\eta|$) are bounded by a multiple of $|\xi-\eta|$ and so that $|\l^\mu(\xi)-\l^\mu(\eta)| \leq C^{\mu}|\xi-\eta|^\mu$ whence for any $M,$ 
$$|([\L^\mu, a(x)] v)^\wedge(\xi)| = |((\l^\mu \hat{a}) \ast \hat{v})(\xi) - (\hat{a} \ast (\l^\mu\hat{v}))(\xi)|$$
$$=|\l^\mu(\xi)\int \hat{a}(\xi-\eta) \hat{v}(\eta)d\eta - \int \hat{a}(\xi -\eta)\l^\mu(\eta) \hat{v}(\eta)\,d\eta|$$
$$=|\int \hat{a}(\xi-\eta)[\l^\mu(\xi)-\l^\mu(\eta)]\hat{v}(\eta) d\eta|$$
$$\leq C^M|\int \widehat{a^{(M+\mu)}}(\xi-\eta)(1+|\eta|^2)^{-M/2}\hat{v}(\eta) d\eta|.$$

\end{proof}
 
\section{Adding a variable}
Previous proofs concerning Gevrey vectors have often, as in Derridj's paper, proved and then used the Gevrey hypoellipticity of the operator 
$$Q=-\frac{\partial^{2}}{\partial t^{2}} - P$$

The proof that a homogeneous solution for $Q$ satisfies $U\in G^{1,s}_{t,x}$ locally for $s\geq 1/\e$ follows using the above techniques and the evident {\em{a priori}} inequality
$$\|W(t,x)\|_{L^2(t),\e(x)}^2 + \sum \|X_j W(t,x)\|_{L^2(t,x)}^2 + \|W(t,x)\|_{1(t),L^2(x)}^2 $$
$$\leq C\{|(QW,W))|_{L^2} + \|W(t,x)\|_{L^2(t,x)}^2\}$$
for $W$ of small support and smooth since the variables are completely separated. 

Then observing that under our hypothesis on the iterates of $P$ on $u,$ the homogeneous convergent series 
$$U(t,x)=\sum_{\ell\geq 0}(-1)^{\ell} \frac{t^{2\ell}}{(2\ell)!}P^\ell u(x)$$
satisfies the above equation in some interval about $t=0$, and hence, restricted to $t=0$ where it is equal to $u,$ will have the desired regularity in Gevrey class.

Finally, since the variables $t$ and $x$ are totally separated in the problem, localizing functions may be taken as products $\phi_1(t) \phi_2(x)$ with $\phi_1$ of Ehrenpreis type or using nested open sets in $t$ while in $x$ Gevrey localization is familiar (and the fact that coefficients now depend on $t$ as well as $x$ presents no new obstacles, even in brackets with $D_t$ or $\L^\b_2$).

\end{document}